\newcommand{\abs}[1]{\lvert#1\rvert}
\newtheorem{theorem}{Theorem}[section]
\theoremstyle{definition}
\theoremstyle{remark}
\numberwithin{equation}{section}
\begin{document}

\title{Local dependencies in random fields via a Bonferroni-type inequality}

\author{Adam Jakubowski}
\address{Faculty of Mathematics and Computer Science, Nicholas Copernicus
University, Chopina 12/18, 87-100 Toru\'n, Poland}
\email{adjakubo@mat.uni.torun.pl}
\thanks{Research of the first author was done during visits to
Universit\'e de Lille I and University of Tennessee, Knoxville.}

\author{Jan Rosi\'nski}
\address{Department of Mathematics, University of Tennessee, Knoxville, TN
37996, USA}
\email{rosinski@math.utk.edu}
\thanks{Research of the second author was supported in part by the NSF
Grant DMS-97-04744.}

\subjclass{Primary 60E15, 60F10; Secondary 60F05, 60E07}
\date{}

\begin{abstract}
We provide an inequality which is a useful tool in studying both large
deviation results and limit theorems for sums of random fields with
"negligible" small values. In particular, the inequality covers cases of
stable limits for random  
variables with heavy tails and compound Poisson limits of $0-1$ random
variables. 
\end{abstract}

\maketitle

\section{Bonferroni-type inequalities in limit theorems for sums of
stationary sequences}

The simplest Bonferroni-type inequality can be formulated in the following
way (see
inequality I.17, p. 16, \cite{GS96}):
\begin{equation}\label{e1}
0 \leq \sum_{i=1}^n P(A_i)- P(\bigcup_{i=1}^n A_i) \leq \sum_{1 \leq i < j
\leq n} P(A_i\cap A_j),
\end{equation}
where $A_1, A_2, \ldots, A_n$ are events in some probability space. 

In general this inequality gives very bad estimate for the difference 
$\sum_{i=1}^n P(A_i)- P(\bigcup_{i=1}^n A_i)$
(see p. 19, \cite{GS96} for
discussion of typical examples). However, when properly used, it brings
essential simplification in many areas.
Perhaps the most known (and the simplest) is the limit theory for order
statistics of stationary sequences, as presented in \cite{LLR83} or
\cite{G78}. It may be instructive to provide the reader with a brief
outline of the reasoning leading to the basic result of this theory
(Theorem 3.4.1, Chapter 3, \cite{LLR83}). 

Let $X_1, X_2, \ldots,$ be a
stationary sequence and let $M_n = \max_{1\leq i \leq n} X_i$ be partial
maxima for this sequence. Given a sequence $\{u_n\}$ of numbers we want to
calculate the limit for $P(M_n \leq u_n)$.  For a large class of
stationary sequences 
(satisfying so called condition $D(u_n)$), we can asymptotically replace
$P(M_n \leq u_n)$ with 
\[ P(M_{[n/k_n]} \leq u_n)^{k_n},\]
with some $k_n \to \infty$. This in turn is asymptotically the same as
\[ \exp(-k_n P(M_{[n/k_n]} > u_n)).\]
For fixed $n$, set $A_i = \{X_i > u_n\}$ and observe that by (\ref{e1})
\[ 
\begin{split}
k_n \abs{P(M_{[n/k_n]} > u_n) - &[n/k_n]P(X_1 > u_n)}\\
 &\leq k_n \sum_{1
\leq i 
< j \leq [n/k_n]} P(X_i > u_n, X_j > u_n).
\end{split}
\]
If so called condition $D'(u_n)$ is also satisfied, then the last
expression above tends to zero as $n \to \infty$ and we can
calculate the 
limit for $P(M_n \leq u_n)$ as if the random variables $X_i$ were
independent, i.e. 
\[ \lim_{n\to\infty} P(M_n \leq u_n) = \exp (- \lim_{n\to\infty} n P(X_1 >
u_n)).\]  
Condition $D(u_n)$ represents here ``mixing" or ``weak dependence"
properties of the sequence in the form proper
for maxima, while condition $D'(u_n)$ asserts that in the sequence
$\{X_i\}$ there are no local (within intervals of length $[n/k_n]$)
clusters of values exceeding levels $u_n$. Since independent random
variables satisfy condition $D'(u_n)$ for sequences $\{u_n\}$ of interest,
one can also say that
the sequence essentially has no ``local dependencies" between random variables.
The latter terminology is even more convincing when one realizes that
condition $D'(u_n)$ cannot hold for $1$-dependent random
variables $X_i=Y_{i-1}\vee Y_i$, where
$Y_i$ is a sequence of independent and identically distributed random
variables and $u_n$ is such that
$\liminf_n nP(Y_1 > u_n) > 0$. Clearly, such $X_i$'s exhibit ``local
dependencies" and admit ``local clusters" of values exceeding levels $u_n$.

It was R.A. Davis who first observed that similar results hold also for
sums of stationary sequences with heavy tails. Using the technique of
extreme value theory as well as the series representation for stable laws
due to LePage, Woodroofe and Zinn, Davis \cite{D83} proved that
asymptotics of sums of 
``weakly dependent" stationary random variables {\em with} heavy tails and
{\em without} local dependencies
is essentially the same as if they were independent. Subsequent papers
\cite{JK89}, \cite{DH95}, \cite{K95}  showed that the essence of Davis'
method was representing sums as integrals with respect
to point processes on $\mathbb{R}^1\setminus
\{0\}$ built upon the sequence $X_i$.  If one defines $N_n(A) =
\sum_{i=1}^n I(X_i/B_n \in A)$, then 
\[ \frac{X_1 + X_2 + \ldots + X_n}{B_n} = \int_{\mathbb{R}^1 \setminus
\{0\}} x N_n(dx),\]
and weak convergence of $N_n$'s implies weak convergence of $S_n/B_n$. In
particular, results for sums of dependent sequences with heavy tails can
be obtained in a similar way as results for sums of independent sequences
were derived in \cite{R86} (this analogy is not applicable for functional
convergence). 

The difference between weakly dependent and independent case is
that in the absence of conditions excluding clusters of ``big" values
(like $D'(u_n)$ in the theory for extremes), the parameters of the
limiting stable law are determined by local dependence properties. Davis
and Hsing \cite{DH95} provide a probabilistic
representation for these parameters.
In some cases (e.g. for $m$-dependent random variables) another, much
simpler representation is available \cite{JK89}, which is valid also for
generalized Poisson limits \cite{K95}.
Comparing to stable limit theorems for $m$-dependent random vectors
obtained by purely 
analytical methods by L. Heinrich 
in \cite{H82}, \cite{H85}, probabilistic reasoning gave both deeper
insight into the structure of the limiting stable laws and allowed
avoiding many of technicalities in formulation of results.
When specialized to sums of $m$-dependent $0-1$ random
variables, the point processes method provides sufficient and {\em
necessary} conditions for 
convergence to compound Poisson distribution \cite{K95}, contrary to the
earlier 
methods based on Poisson 
approximations via the Chen-Stein method (see e.g. \cite{AGG90}), where
only sufficient conditions are given.

It is interesting that most of the above results can be obtained without
employing point processes techniques and  using the following
Bonferroni-type inequality.

\begin{theorem}[Lemma 3.2, \cite{J97}]
Let $Z_1, Z_2, \ldots $ be stationary random vectors taking values in
a linear space $(E, \mathcal{B}_E)$. Set $S_0 = 0$, $S_k = \sum_{j=1}^k Z_j,\
k\in\mathbb{N}$.

If $U \in \mathcal{B}_E$ is such that $0 \notin U$, then for every $n
\in \mathbb{N}$ and every $m$, $0 \leq m \leq n$, the following
inequality holds:
\begin{equation}\label{emain}
\begin{split}
| P( S_n \in U) &- n\big( P(S_{m+1} \in U) - P(S_m \in U)\big)|
\vphantom{\sum_{1 \leq i < j \leq n}}\qquad\qquad \\
&\leq 
2 m P(Z_1 \neq 0) + 2 \sum_{\genfrac{}{}{0pt}{2}{1 \leq i < j \leq n}{j -
i > m}} 
P(Z_i \neq 0, Z_j \neq 0).
\end{split}
\end{equation}
\end{theorem}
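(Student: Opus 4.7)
The plan is to combine a telescoping identity with a windowing argument, invoking stationarity twice in parallel. Define $\Delta_k := P(S_k \in U, Z_k \neq 0) - P(S_{k-1} \in U, Z_k \neq 0)$. Since $S_k = S_{k-1}$ on $\{Z_k = 0\}$, we have $P(S_k \in U) - P(S_{k-1} \in U) = \Delta_k$; telescoping from $k = 1$ to $n$ and using $P(S_0 \in U) = 0$ (because $0 \notin U$) gives $P(S_n \in U) = \sum_{k=1}^n \Delta_k$, with the obvious bound $|\Delta_k| \leq P(Z_k \neq 0) = P(Z_1 \neq 0)$.

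For $k \geq m+1$ introduce the window sums $T_k := Z_{k-m} + \cdots + Z_k$ and $\tilde T_k := Z_{k-m} + \cdots + Z_{k-1}$. Both differ from the corresponding partial sums by the same residual: $S_k - T_k = S_{k-1} - \tilde T_k = S_{k-m-1}$. On $\{S_{k-m-1} = 0\}$ the two approximations are exact, and otherwise the discrepancy costs at most $P(S_{k-m-1} \neq 0,\, Z_k \neq 0) \leq \sum_{i \leq k-m-1} P(Z_i \neq 0, Z_k \neq 0)$ per approximation. Setting $\alpha_k := P(T_k \in U, Z_k \neq 0) - P(\tilde T_k \in U, Z_k \neq 0)$, this yields
\[ |\Delta_k - \alpha_k| \leq 2 \sum_{i=1}^{k-m-1} P(Z_i \neq 0, Z_k \neq 0). \]
Stationarity (shift indices by $k-m-1$) identifies $\alpha_k$ with $P(S_{m+1} \in U, Z_{m+1} \neq 0) - P(S_m \in U, Z_{m+1} \neq 0)$, and reapplying the ``$S_{m+1} = S_m$ on $\{Z_{m+1} = 0\}$'' identity that started the telescoping turns this into $\alpha := P(S_{m+1} \in U) - P(S_m \in U)$, independent of $k$; in particular $|\alpha| \leq P(Z_1 \neq 0)$.

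Writing $n\alpha = m\alpha + (n-m)\alpha$ and $\sum_{k=1}^n \Delta_k = \sum_{k=1}^m \Delta_k + \sum_{k=m+1}^n \Delta_k$, the triangle inequality gives
\[ \bigl| P(S_n \in U) - n\alpha \bigr| \leq \Bigl| \sum_{k=1}^m \Delta_k \Bigr| + m|\alpha| + \sum_{k=m+1}^n |\Delta_k - \alpha_k|. \]
The first two terms are each bounded by $m P(Z_1 \neq 0)$, and the third, after reindexing over pairs $1 \leq i < k \leq n$ with $k - i > m$, is bounded by $2 \sum_{1 \leq i < k \leq n,\; k - i > m} P(Z_i \neq 0, Z_k \neq 0)$. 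Adding these contributions produces exactly the right-hand side of (\ref{emain}).

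The one delicate design choice is picking $T_k$ and $\tilde T_k$ so that (i) they share the same residual $S_{k-m-1}$, which is what keeps the prefactor on the pair-sum at $2$ rather than something larger, and (ii) their increment, after the stationary shift, reproduces the canonical quantity $P(S_{m+1} \in U) - P(S_m \in U)$. Once these choices are made, the rest is routine bookkeeping with the triangle inequality and elementary union bounds.
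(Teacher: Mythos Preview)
Your proof is correct. It proceeds by a direct telescoping identity $P(S_n\in U)=\sum_{k=1}^n\Delta_k$ followed by a windowing replacement $\Delta_k\mapsto\alpha_k$, with stationarity identifying each $\alpha_k$ with the target increment $\alpha=P(S_{m+1}\in U)-P(S_m\in U)$; the error control via $\{S_{k-m-1}\ne0\}\subset\bigcup_{i\le k-m-1}\{Z_i\ne0\}$ is clean and yields the stated constants exactly.

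The paper, however, does not argue this way. It views the inequality as the $d=1$ stationary specialization of its random-field Theorem~\ref{TM}, and the proof there is a \emph{pointwise indicator} argument: one shows the inequality with $I(\cdot)$ in place of $P(\cdot)$ at each sample point, then integrates. The key step is a case analysis on $\mathrm{diam}(\Lambda_0)$, where $\Lambda_0=\{t:Z_t\ne0\}$, combined with an inclusion--exclusion computation showing $\sum_t\delta_t(U)=I(S_\Lambda\in U)$ when the nonzero values all fit in a single $m$-block. Your approach is more elementary and hits the sharp constants $2m$ and $2$ directly, but it leans essentially on stationarity (to make $\alpha_k$ independent of $k$) and on the one-dimensional telescoping structure. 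The paper's indicator method is heavier machinery, but it is what permits the extension to nonstationary random fields on $\mathbb{Z}^d$, where neither telescoping nor a shift-invariance argument is available; the price is the weaker constant $c_2(d,m)$ in Theorem~\ref{TM}.
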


Although inequality (\ref{emain}) does not fit the formal definition of the
Bonferroni-type inequality given on p. 10 in \cite{GS96}, we call it
Bonferroni-type for the following reasons.
\begin{enumerate}
\item When $m=0$ we obtain from (\ref{emain})
\[| P(S_n \in U) - n P(Z_1 \in U)| \leq  
2 \sum_{1 \leq i < j \leq n}
P(Z_i \neq 0, Z_j \neq 0), \]
what is formally similar to (\ref{e1}). Notice that the constant 2 above
is sharp.
\item The inequality becomes interesting only if we deal with at least
``weak dependence", that is under purely probabilistic assumption.
\item The inequality is proved by integrating its pointwise version  and
in this sense its proof is similar to proofs of the Bonferroni-type 
inequalities obtained by the ``indicator method" (see \cite{GS96}).
\item In Section 3 we provide a unifying framework for both inequalities
(\ref{e1}) and (\ref{emain}). 
\end{enumerate}

The inequality looks very restrictive and may seem applicable only to 0-1
stationary random variables $Z_j = I_{A_j}$, in which case it reads as
follows.  
\[
\begin{split}
\bigg| P( \sum_{j=1}^{n} I_{A_j} = k) - n\bigg( P(\sum_{j=1}^{m+1}
I_{A_j} = k) 
&- P(\sum_{j=1}^{m} I_{A_j} = k)\bigg)\bigg| 
\vphantom{\sum_{1 \leq i < j \leq n}}\qquad\qquad \\
&\leq 
2 m P(A_1) + 2 \sum_{\genfrac{}{}{0pt}{2}{1 \leq i < j \leq n}{j -
i > m}} 
P(A_i \cap A_j).
\end{split}
\]
The above inequality can be directly applied to give an alternative (and
much simpler!) proof
of results due to Kobus \cite{K95} for $m$-dependent $0-1$ random variables.

Originally however inequality (\ref{emain}) was designed to manipulate with
probabilities of large deviation for sums of random variables with heavy
tails. An extensive discussion of such results as well as their meaning for
stable limit theorems (essential part of necessary and sufficient
conditions) can be found in \cite{J93}, \cite{J97} and \cite{JNZ97} (for
necessary results on stable laws we refer to \cite{JW94} and \cite{ST94}).
Here let us sketch basic ideas only.

Let $X_1, X_2, \ldots$ be a stationary sequence, $S_n = X_1 + X_2 + \ldots
+ X_n$, $B_n \to \infty$ be a
$1/p$-regularly varying sequence, where $ 0< p < 2$  and let $x_n \to \infty$.
We are interested in asymptotic behavior of large deviation probabilities
$P(S_n/B_n > x_n)$. More precisely, we want to prove that under relatively
mild assumptions
\begin{equation}
\label{LD}
 x_n^p P(S_n/B_n > x_n) \to c_+,\end{equation}
where the constant $0 < c_+ < \infty$ can be identified. The first step
consists in proving that as $n \to \infty$
\[ x_n^p \big(P(S_n/B_n > x_n) - P(\sum_{j=1}^n Z_{n,j}^{\delta_n} > x_n)
\big) \to 0,\]
where 
\[ Z_{n,j}^{\delta_n} = \begin{cases}
0 & \text{if $|X_j| < B_n \cdot x_n \cdot \delta_n$},\\
X_j/B_n & \text{otherwise}.
\end{cases}
\]
This requires some polynomial domination condition on
tail probabilities of $X_j$'s and, if $1 \leq p < 2$, some assumptions on
the size of variances of random variables $T_n^{\delta_n} = S_n/B_n -
\sum_{j=1}^n Z_{n,j}^{\delta_n}$. 

In the next, essential step, we apply 
inequality (\ref{emain}) to random variables $Z_{n,j}^{\delta_n}$,
$j=1,2,\ldots, n$, and $U = (x_n, \infty)$. Careful control of the size
of $x_n$ and $\delta_n$ \textit{plus} information on dependence (e.g.
$m$-dependence) \textit{plus} return to original random variables allow
reducing (\ref{LD}) to 
\[ x_n^p n \big(P( X_1 + X_2 + \ldots + X_{m+1} > x_n B_n) - P(X_1 + X_2
\ldots + 
X_m > x_n B_n)\big) \to c_+.\]
This shows that the limiting parameter $c_+$ can be calculated using 
only finite dimensional (of size $m+1$) distributions of the sequence
$X_1, X_2, \ldots, $ and that its value depends on local dependence
structures, as desired.

Clearly, variants of the above reasoning with $m$ varying are also
workable.

In the present paper we are going to prove an analog of (\ref{emain})
for random fields and in nonstationary case. Following the line of
\cite{J97} it allows deriving results for $m$-dependent random fields,
similar to stable limit theorems of Heinrich \cite{H86}, \cite{H87} or
results on convergence to compound Poisson distributions \cite{AGG90}.  We
leave their extensive discussion to 
other place.

\section{A Bonferroni-type inequality for random fields}

In what follows we choose and fix two integer numbers: \\
$d$ - the dimension of the
lattice $\mathbb{Z}^d$ indexing random fields
$\{Z_{\boldsymbol{t}}\}_{\boldsymbol{t}\in \mathbb{Z}^d}$;\\
$m$ - the admissible size of local clusters, $m \geq 0$.

If $\{Z_{\boldsymbol{t}}\}$
is a random field 
and  $\Lambda \subset \mathbb{Z}^d$ is a finite set, we define
\begin{equation}\label{eslambda}
S_{\Lambda} = \sum_{\boldsymbol{t}\in \Lambda} Z_{\boldsymbol{t}},\ \ 
S_{\emptyset} = 0.
\end{equation}
Let
\[B = \{0,1,\ldots,m\}^d,\]
and $B_{\boldsymbol{t}} = B + \boldsymbol{t}$, \ 
$\boldsymbol{t} \in \mathbb{Z}^d$.  Further, let
\[\mathcal{E} = \{0,1\}^d = \{\boldsymbol{\varepsilon} = (\varepsilon_1,
\varepsilon_2, \ldots, \varepsilon_d):\varepsilon_j = 0\ \text{or}\ 1\}\]
and let
\begin{equation} \label{Be}
B_{\boldsymbol{t}}^{\boldsymbol{\varepsilon}} = B_{\boldsymbol{t}}\cap
B_{\boldsymbol{t + \varepsilon}}, \ \ \ \boldsymbol{\varepsilon} \in
\mathcal{E}.
\end{equation}
Define, for $U\in \mathcal{B}_E$ and $\boldsymbol{t}\in \mathbb{Z}^d$,
\[ \Delta_{\boldsymbol{t}}(U) = \sum_{\boldsymbol{\varepsilon} \in
\mathcal{E}}
(-1)^{\abs{\boldsymbol{\varepsilon}}}
P(S_{B_{\boldsymbol{t}}^{\boldsymbol{\varepsilon}}} 
\in U)\]
where 
\[ \abs{\boldsymbol{\varepsilon}} = \varepsilon_1 + 
\varepsilon_2 +  \cdots +\varepsilon_d.\]
Put $\boldsymbol{1} = (1,\ldots,1) \in \mathcal{E}$. Define the
{\it ``boundary''} of a  set $\Lambda \subset \mathbb{Z}^d$ by
\begin{equation}\label{partl}
\partial\Lambda = \{\boldsymbol{s} \notin \Lambda :
\exists_{\boldsymbol{t} \in \Lambda}\ \boldsymbol{s} \in 
B_{\boldsymbol{t}}\} \cup  \{\boldsymbol{t} \in \Lambda :
\exists_{\boldsymbol{s} \in \Lambda^c}\ \boldsymbol{t} \in 
B_{\boldsymbol{s}}\setminus B_{\boldsymbol{s+\boldsymbol{1}}}\}
\end{equation}
Notice that the second part of $\partial\Lambda$,
consisting of points from $\Lambda$, is empty when $d=1$.
\vskip18pt

\begin{theorem}\label{TM}
Let $Z_{\boldsymbol{t}}$, $\boldsymbol{t}\in \mathbb{Z}^d$ be a random
field with values in a linear space $(E, \mathcal{B}_E)$. If $U \in
\mathcal{B}_E$ and $0 \notin U$ then 
\vskip6pt
\begin{equation}\label{eM}
\begin{split}
\abs{P(S_{\Lambda} \in U) - \sum_{\boldsymbol{t} \in \Lambda}
\Delta_{\boldsymbol{t}}(U)} &\leq  c_1(d,m) \sum_{\boldsymbol{s}
\in \partial\Lambda} P(Z_{\boldsymbol{s}} \neq 0) \\
&\quad + c_2(d,m)
\sum_{\genfrac{}{}{0pt}{1}{\boldsymbol{s,t} \in \Lambda}{\|\boldsymbol{t} -
\boldsymbol{s}\|_{\infty} > m}} P(Z_{\boldsymbol{s}} \neq
0,Z_{\boldsymbol{t}} \neq 0), 
\end{split}
\end{equation}
\vskip6pt \noindent
where 
$ c_1(d,m) = 2^{d}((m+1)^d  - 1),$
and
$ c_2(d,m) = 2^{-1} (1 + 2^{d} (2m+1)^d).$
\end{theorem}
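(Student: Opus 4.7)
My plan is to follow the ``indicator method'' underlying the proof of the 1D analogue in \cite{J97}: prove a pathwise version of \eqref{eM} with indicators in place of probabilities, and then integrate. Introduce the pathwise mixed difference
\[
\widetilde{\Delta}_{\mathbf{t}}(\omega) = \sum_{\boldsymbol{\varepsilon} \in \mathcal{E}} (-1)^{\abs{\boldsymbol{\varepsilon}}} I\bigl(S_{B_{\mathbf{t}}^{\boldsymbol{\varepsilon}}}(\omega) \in U\bigr),
\]
so that $\Delta_{\mathbf{t}}(U) = E\widetilde{\Delta}_{\mathbf{t}}$. The target is the pointwise bound
\[
\Bigl| I(S_\Lambda \in U) - \sum_{\mathbf{t} \in \Lambda} \widetilde{\Delta}_{\mathbf{t}}(\omega) \Bigr| \leq c_1(d,m) \sum_{\mathbf{s} \in \partial\Lambda} I(Z_{\mathbf{s}} \neq 0) + c_2(d,m) \sum_{\genfrac{}{}{0pt}{2}{\mathbf{s},\mathbf{t} \in \Lambda}{\|\mathbf{s}-\mathbf{t}\|_\infty > m}} I(Z_\mathbf{s} \neq 0) I(Z_\mathbf{t} \neq 0),
\]
from which \eqref{eM} follows by taking expectations and using $P(Z_\mathbf{s}\neq 0,Z_\mathbf{t}\neq 0)=E[I(Z_\mathbf{s}\neq 0)I(Z_\mathbf{t}\neq 0)]$.

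To prove the pathwise bound I fix $\omega$, put $N=\{\mathbf{s}:Z_\mathbf{s}(\omega)\neq 0\}$, and call the maximal subsets of $N$ whose consecutive $\|\cdot\|_\infty$-gaps do not exceed $m$ the \emph{$m$-clusters}. The key structural fact is that $\widetilde{\Delta}_\mathbf{t}$ is a $d$-fold discrete mixed difference in the lower-left corner of $B_\mathbf{t}$: pairing summands $\boldsymbol{\varepsilon}$ with $\boldsymbol{\varepsilon}+\mathbf{e}_j$ along any axis $j$ shows that $\widetilde{\Delta}_\mathbf{t}=0$ whenever $N$ contains no point on the $j$-th lower face of $B_\mathbf{t}$. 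Consequently only those $\mathbf{t}$ for which $t_j=\min\{s_j:\mathbf{s}\in N\cap B_\mathbf{t}\}$ in every coordinate direction can contribute to the sum. In the \emph{ideal configuration}---where $N\cap\Lambda$ is a single $m$-cluster whose entire $m$-neighbourhood is contained in $\Lambda\setminus\partial\Lambda$---an iterated one-dimensional telescoping, performed one coordinate at a time over these corner-$\mathbf{t}$'s, collapses $\sum_{\mathbf{t}\in\Lambda}\widetilde{\Delta}_\mathbf{t}$ identically to $I(S_\Lambda\in U)$, and the pathwise error vanishes.

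Deviations from the ideal configuration generate exactly two error mechanisms, matched to the two right-hand terms. Mechanism (i): a nonzero $Z_\mathbf{s}$ with $\mathbf{s}\in\partial\Lambda$---either $\mathbf{s}$ outside $\Lambda$ but reaching into some $B_\mathbf{t}$ with $\mathbf{t}\in\Lambda$ (first part of \eqref{partl}), or $\mathbf{s}$ inside $\Lambda$ but lying on the lower-left face of some $B_\mathbf{u}$ with $\mathbf{u}\in\Lambda^c$ (second part)---spoils the telescoping cancellation in at most $(m+1)^d-1$ summands $\widetilde{\Delta}_\mathbf{t}$, each of modulus bounded by $2^d$, giving $c_1(d,m)=2^d((m+1)^d-1)$. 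Mechanism (ii): a pair $\mathbf{s},\mathbf{t}\in\Lambda\cap N$ with $\|\mathbf{s}-\mathbf{t}\|_\infty>m$ can cause one unit of discrepancy inside $I(S_\Lambda\in U)$ and perturb at most $2^d(2m+1)^d$ of the $\widetilde{\Delta}_\mathbf{u}$'s sitting near either point, and standard symmetrization yields $c_2(d,m)=\frac{1}{2}(1+2^d(2m+1)^d)$. The main technical obstacle will be executing the coordinate-by-coordinate telescoping in $d\geq 2$ without double counting---the seemingly baroque second part of \eqref{partl} is introduced precisely to track the ``lower-left leak'' that has no analogue when $d=1$. Once the pathwise inequality is in place, integration against $P$ completes the proof.
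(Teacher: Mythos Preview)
Your overall architecture---prove a pointwise inequality with indicators and then take expectations---is exactly the paper's, and your structural observation that $\widetilde{\Delta}_{\mathbf t}=0$ unless each lower face of $B_{\mathbf t}$ meets $N$ is correct and is used in the paper. The gap is in your identification of the ``ideal configuration''. The identity
\[
\sum_{\mathbf t\in\Lambda}\widetilde{\Delta}_{\mathbf t}=I(S_\Lambda\in U)
\]
does \emph{not} hold for an arbitrary single $m$-cluster away from the boundary; it holds only when $\operatorname{diam}(N)\le m$, i.e.\ when $N$ fits inside a single block $B_{\mathbf t_0}$. Here is a one-dimensional counterexample: take $m\ge 1$, $N=\{0,m,2m\}$ (a single $m$-cluster, since consecutive gaps equal $m$), $Z_0=Z_m=Z_{2m}=1$, and $U=\{3\}$. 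The only nonvanishing terms are $\widetilde{\Delta}_0,\widetilde{\Delta}_m,\widetilde{\Delta}_{2m}$, and their sum is
\[
I(Z_0+Z_m\in U)-I(Z_m\in U)+I(Z_m+Z_{2m}\in U)=0,
\]
whereas $I(S_\Lambda\in U)=I(3\in U)=1$. So the pathwise error is $1$, not $0$, even though there is no boundary interaction and only one $m$-cluster. Your ``iterated telescoping over corner-$\mathbf t$'s'' cannot collapse the sum here, because moving from $B_{\mathbf t}$ to $B_{\mathbf t+\mathbf e_j}$ picks up new nonzero $Z$'s on the far side of the block.

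The fix is to abandon the $m$-cluster decomposition entirely and split instead on $\operatorname{diam}(\Lambda_0)$, where $\Lambda_0=\{\mathbf s:Z_{\mathbf s}\ne 0\}$ (after first truncating $Z$ to $\Lambda$, which produces the $\partial_1\Lambda$ part of the boundary term). In the case $\operatorname{diam}(\Lambda_0)\le m$ the identity \emph{does} hold; the paper proves it not by telescoping but by an inclusion--exclusion argument applied to the additive set function $Q(A)=\sum_{\mathbf t\in A\cap B_{\mathbf t_0}}I(S_{B_{\mathbf t}\cap B_{\mathbf t_0}}\in U)$, which collapses $\sum_{\boldsymbol\varepsilon}(-1)^{|\boldsymbol\varepsilon|}Q(B_{\boldsymbol\varepsilon})$ to the single corner value $q_{\mathbf 0}=I(S_\Lambda\in U)$; the residual $\sum_{\mathbf s\in B_{\mathbf t_0}\cap\Lambda^c}\delta_{\mathbf s}(U)$ gives exactly the second half of $\partial\Lambda$. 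When $\operatorname{diam}(\Lambda_0)>m$ there is automatically an ordered pair $\mathbf s,\mathbf t\in\Lambda$ with $Z_{\mathbf s}\ne 0$, $Z_{\mathbf t}\ne 0$, $\|\mathbf s-\mathbf t\|_\infty>m$, so the second right-hand sum is at least $2$; one then bounds $I(S_\Lambda\in U)$ and $\sum_{\mathbf t}|\widetilde{\Delta}_{\mathbf t}|$ separately and crudely (distinguishing $m<\operatorname{diam}\le 2m$, where the support sits in a $(2m{+}1)^d$-cube, from $\operatorname{diam}>2m$, where every $\mathbf u\in\Lambda_0$ has a far partner). Your mechanism~(ii), phrased as ``perturbation from the ideal'', cannot be made precise because the reference configuration you perturb from already carries nonzero error.
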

\vskip18pt

\begin{proof} 
Let
\begin{equation}\label{edelta}
\delta_{\boldsymbol{t}}(U) = \sum_{\boldsymbol{\varepsilon} \in
\mathcal{E}}
(-1)^{\abs{\boldsymbol{\varepsilon}}}
I(S_{B_{\boldsymbol{t}}^{\boldsymbol{\varepsilon}}} 
\in U).
\end{equation}
Since $\Delta_{\boldsymbol{t}}(U) = E \delta_{\boldsymbol{t}}(U)$,
it is enough
to establish a ``pointwise" version of (\ref{eM}), i.e. 
\begin{equation}\label{eM1}
\begin{split}
\abs{I(S_{\Lambda} \in U) - \sum_{\boldsymbol{t} \in \Lambda}
\delta_{\boldsymbol{t}}(U)} &\leq  c_1(d,m) \sum_{\boldsymbol{s}
\in \partial\Lambda} I(Z_{\boldsymbol{s}} \neq 0) \\
&\quad + c_2(d,m)
\sum_{\genfrac{}{}{0pt}{1}{\boldsymbol{s,t} \in \Lambda}{\|\boldsymbol{t} -
\boldsymbol{s}\|_{\infty} > m}} I(Z_{\boldsymbol{s}} \neq
0,Z_{\boldsymbol{t}} \neq 0), 
\end{split}
\end{equation}

We shall deal with a modification of $Z_{\boldsymbol{t}}$ which vanishes
outside our set $\Lambda$:
\[
Z_{\boldsymbol{t}}' = \begin{cases}
Z_{\boldsymbol{t}} & \text{if $\boldsymbol{t}\in \Lambda$},\\
0 & \text{if $\boldsymbol{t} \notin \Lambda$}.
\end{cases}
\]
Let $S_{\Lambda}'$ and $\delta_{\boldsymbol{t}}'(U)$ denote quantities
defined by replacement of $Z_{\boldsymbol{t}}$ with $Z_{\boldsymbol{t}}'$
in formulas (\ref{eslambda}) and (\ref{edelta}), respectively. Then by the
very definition we
have 
\[ I(S_{\Lambda}\in U) = I(S_{\Lambda}'\in U). \]
Further,
$\delta_{\boldsymbol{t}}(U) \neq \delta_{\boldsymbol{t}}'(U)$
implies that there exists $\boldsymbol{s} \in B_{\boldsymbol{t}} \cap
\Lambda^c$ 
such that $Z_{\boldsymbol{s}} \neq 0$.
Hence we can estimate
\begin{equation}\label{eM2}
\begin{split}
\abs{(I(S_{\Lambda} \in U) - \sum_{\boldsymbol{t} \in \Lambda}
\delta_{\boldsymbol{t}}(U)) &- (I(S_{\Lambda}' \in U) - \sum_{\boldsymbol{t}
\in \Lambda} 
\delta_{\boldsymbol{t}}'(U))}\\ &\leq
\sum_{\genfrac{}{}{0pt}{1}{\boldsymbol{t} \in
\Lambda}{B_{\boldsymbol{t}}\cap \Lambda^c \neq \emptyset}} 
2^{d} I(\exists_{\boldsymbol{s}\in B_{\boldsymbol{t}}\cap \Lambda^c}
Z_{\boldsymbol{s}} \neq 0)\\
&\leq
2^{d} \sum_{\genfrac{}{}{0pt}{1}{\boldsymbol{t} \in
\Lambda}{B_{\boldsymbol{t}}\cap \Lambda^c \neq \emptyset}}
 \sum_{\boldsymbol{s}\in B_{\boldsymbol{t}}\cap \Lambda^c}
I(Z_{\boldsymbol{s}} \neq 0) =: R_1,
\end{split}
\end{equation}
where the factor $2^{d}$ comes from the cardinality of $\mathcal{E}$.
Furthermore, if $\partial_1\Lambda$ denotes the first part of the boundary
(\ref{partl}) consisting of points from $\Lambda^c$, then
\begin{equation}\label{eM3}
\begin{split}
R_1 &= 2^{d} \sum_{\boldsymbol{t} \in \Lambda} \sum_{\boldsymbol{s}\in
\Lambda^c} I_{B_{\boldsymbol{t}}\cap 
\Lambda^c}(\boldsymbol{s}) I(Z_{\boldsymbol{s}} \neq 0) \\
 &= 2^{d} \sum_{\genfrac{}{}{0pt}{1}{\boldsymbol{s} \in
\Lambda^c}{\exists_{\boldsymbol{t}\in \Lambda} \boldsymbol{s} \in
B_{\boldsymbol{t}}}}  
( \sum_{\boldsymbol{t}\in \Lambda} I_{B_{\boldsymbol{t}}\cap
\Lambda^c}(\boldsymbol{s}))
I(Z_{\boldsymbol{s}} \neq 0) \\
&\leq 2^{d} ((m+1)^d -1) \sum_{\boldsymbol{s} \in \partial_1\Lambda}
I(Z_{\boldsymbol{s}}\neq 0).
\end{split}
\end{equation}

Hence it suffices to prove (\ref{eM1}) under the assumption that
\begin{equation}\label{zerovalue}
Z_{\boldsymbol{t}} = 0\ \ \text{\rm for}\ \ \boldsymbol{t} \notin \Lambda.
\end{equation}
In this case the first sum on the right hand side of (\ref{eM1}) will be over
the second part of the boundary (\ref{partl}) consisting of points from
$\Lambda$.  Now define a random set
\[ \Lambda_0 = \{\boldsymbol{s}\in \mathbb{Z}^d : Z_{\boldsymbol{t}} \ne 0\}
\]
and let
\[ \text{diam}(\Lambda_0) = \sup\{\|\boldsymbol{s}-\boldsymbol{u}\|_{\infty} :
\boldsymbol{s},\boldsymbol{u}\in \Lambda_0\}.
\]
Notice that (\ref{zerovalue}) gives
\begin{equation} \label{sub}
\Lambda_0 \subset \Lambda
\end{equation}
so that \ $\text{diam}(\Lambda_0)$ \ is a bounded random variable.
For a \underline{fixed} point $\omega$ in the probability space we will
consider three particular cases of $\text{diam}(\Lambda_0(\omega))$:

\subsection*{Case 1. \ $\text{diam}(\Lambda_0)\le m$.} \ 

This assumption implies that $\Lambda_0 \subset B_{\boldsymbol{t}_0}$
for some $\boldsymbol{t}_0\in \mathbb{Z}^d$. Hence
\begin{equation}\label{eMxx0}
 I(S_{\Lambda} \in U) =I(S_{\Lambda_0} \in U)= I(S_{B_{\boldsymbol{t}_0}}
\in U).
\end{equation}
The first observation is that if $\boldsymbol{t} \notin
B_{\boldsymbol{t}_0}$, then $\delta_{\boldsymbol{t}}(U) = 0$ and
consequently
\begin{equation}\label{eMxx}
\sum_{\boldsymbol{t} \in \Lambda}
\delta_{\boldsymbol{t}}(U) = \sum_{\boldsymbol{t} \in B_{\boldsymbol{t}_0}
\cap \Lambda}
\delta_{\boldsymbol{t}}(U).
\end{equation}
Indeed, for $\boldsymbol{t} = (t_1,t_2,\ldots,t_d) \notin
B_{\boldsymbol{t}_0}$ we have
\[
\begin{split}
\delta_{\boldsymbol{t}}(U) &= \sum_{\boldsymbol{\varepsilon} \in
\mathcal{E}}
(-1)^{\abs{\boldsymbol{\varepsilon}}}
I(S_{B_{\boldsymbol{t}}^{\boldsymbol{\varepsilon}}} 
\in U)\\
&=  \sum_{\boldsymbol{\varepsilon} \in
\mathcal{E}}
(-1)^{\abs{\boldsymbol{\varepsilon}}}
I(S_{B_{\boldsymbol{t}}\cap B_{\boldsymbol{t + \varepsilon}} \cap
B_{\boldsymbol{t_0}}}  
\in U).
\end{split}
\]
If $t_k \geq t^0_k,\ k=1,2,\ldots,d$, where $\boldsymbol{t}_0 =
(t^0_1,t^0_2,\ldots,t^0_d)$, then either $\boldsymbol{t} \in
B_{\boldsymbol{t}_0}$ or $B_{\boldsymbol{t}} \cap B_{\boldsymbol{t}_0} =
\emptyset$. The former possibility has been excluded and by the latter
$\delta_{\boldsymbol{t}}(U) = 0$. So assume that $t_k < t^0_k$ for some
$k,\ 1 \leq k \leq d$. Let $\boldsymbol{\varepsilon}' =
(\varepsilon_1,\ldots,\varepsilon_{k-1},0,\varepsilon_{k+1},\ldots,
\varepsilon_d)$ and $\boldsymbol{\varepsilon}'' =
(\varepsilon_1,\ldots,\varepsilon_{k-1},1,\varepsilon_{k+1},\ldots,
\varepsilon_d)$. Then 
\[ S_{B_{\boldsymbol{t}}\cap B_{\boldsymbol{t +
\varepsilon'}} \cap B_{\boldsymbol{t_0}}} = S_{B_{\boldsymbol{t}}\cap
B_{\boldsymbol{t + \varepsilon''}} \cap B_{\boldsymbol{t_0}}},\] 
hence
\[ (-1)^{\abs{\boldsymbol{\varepsilon}'}}
I(S_{B_{\boldsymbol{t}}\cap B_{\boldsymbol{t + \varepsilon'}} \cap
B_{\boldsymbol{t_0}}} \in U) + (-1)^{\abs{\boldsymbol{\varepsilon''}}}
I(S_{B_{\boldsymbol{t}}\cap B_{\boldsymbol{t + \varepsilon''}} \cap
B_{\boldsymbol{t_0}}} \in U) = 0,\]
and so $\delta_{\boldsymbol{t}}(U) = 0$. Thus (\ref{eMxx}) follows and now we
will prove that
\begin{equation}
I(S_{B_{\boldsymbol{t}_0}} \in U) = 
\sum_{\boldsymbol{t} \in B_{\boldsymbol{t}_0}}
\delta_{\boldsymbol{t}}(U).
\end{equation} 
Define 
\[q_{\boldsymbol{t}} =
I(S_{B_{\boldsymbol{t}}\cap B_{\boldsymbol{t}_0}}\in U)\] 
and for $A \subset
\mathbb{Z}^d$  
\[ Q(A) = \sum_{\boldsymbol{t} \in A\cap B_{\boldsymbol{t}_0}}
q_{\boldsymbol{t}}.\] 
Notice that for some $\boldsymbol{t} \in B_{\boldsymbol{t}_0}$ the points
$\boldsymbol{t+\varepsilon}$ may lie outside of $B_{\boldsymbol{t}_0}$, but
then
$q_{\boldsymbol{t+\varepsilon}}=0$ and so
\[ Q(B_{\boldsymbol{\varepsilon}}) = \sum_{\boldsymbol{t} \in
B_{\boldsymbol{t}_0}} q_{\boldsymbol{t + \varepsilon}}.\]
By the assumption currently in force
\begin{equation}\label{eMxxx}
\begin{split}
\sum_{\boldsymbol{t} \in B_{\boldsymbol{t}_0}}
\delta_{\boldsymbol{t}}(U) &= \sum_{\boldsymbol{t} \in B_{\boldsymbol{t}_0}}
\sum_{\boldsymbol{\varepsilon} \in
\mathcal{E}} (-1)^{\abs{\boldsymbol{\varepsilon}}}
I(S_{B_{\boldsymbol{t}}\cap B_{\boldsymbol{t + \varepsilon}} \cap
B_{\boldsymbol{t}_0}}  \in U)\\
&= \sum_{\boldsymbol{t} \in B_{\boldsymbol{t}_0}}
\sum_{\boldsymbol{\varepsilon} \in \mathcal{E}}
(-1)^{\abs{\boldsymbol{\varepsilon}}} q_{\boldsymbol{t + \varepsilon}}\\
&= \sum_{\boldsymbol{\varepsilon} \in \mathcal{E}}
(-1)^{\abs{\boldsymbol{\varepsilon}}} \sum_{\boldsymbol{t} \in
B_{\boldsymbol{t}_0}} q_{\boldsymbol{t + \varepsilon}}\\
&= \sum_{\boldsymbol{\varepsilon} \in \mathcal{E}}
(-1)^{\abs{\boldsymbol{\varepsilon}}} Q(B_{\boldsymbol{\varepsilon}})
\end{split}
\end{equation}
The function $Q$ is additive, hence by the inclusion-exclusion formula
\begin{equation}
\label{eMxxxx} Q(\bigcup_{k=1}^d B_{\boldsymbol{e}_k}) =
\sum_{\genfrac{}{}{0pt}{1}{\boldsymbol{\varepsilon} \in
\mathcal{E}}{\boldsymbol{\varepsilon} \neq \boldsymbol{0}}}
(-1)^{\abs{\boldsymbol{\varepsilon}} - 1} Q(\bigcap_{\{k : \varepsilon_k =
1\}} B_{\boldsymbol{e}_k})=
\sum_{\genfrac{}{}{0pt}{1}{\boldsymbol{\varepsilon} \in 
\mathcal{E}}{\boldsymbol{\varepsilon} \neq \boldsymbol{0}}}
(-1)^{\abs{\boldsymbol{\varepsilon}} - 1} Q(B_{\boldsymbol{\varepsilon}}),
\end{equation}
where $\boldsymbol{e}_k$ are the standard unit vectors in $\mathbb{Z}^d$.
Combining (\ref{eMxxx}) and (\ref{eMxxxx}) we obtain
\[ \sum_{\boldsymbol{t} \in B_{\boldsymbol{t}_0}}
\delta_{\boldsymbol{t}}(U) = Q(B_{\boldsymbol{0}}) - Q(\bigcup_{k=1}^d
B_{\boldsymbol{e}_k}) = q_{\boldsymbol{0}} = I(S_{B_{\boldsymbol{t}_0}}
\in U).\] 
Hence, in view of (\ref{eMxx0})--(\ref{eMxx}),
\[
I(S_{\Lambda} \in U) -\sum_{\boldsymbol{t} \in \Lambda}
\delta_{\boldsymbol{t}}(U) = \sum_{\boldsymbol{s} \in
B_{\boldsymbol{t}_0}\cap \Lambda^c}
\delta_{\boldsymbol{s}}(U).
\]
Observe that $\delta_{\boldsymbol{s}}(U)\ne 0$ implies that
there is $\boldsymbol{t}\in  B_{\boldsymbol{s}}\setminus
B_{\boldsymbol{s+\boldsymbol{1}}}$ such that $Z_{\boldsymbol{t}}\ne 0$.
Indeed, if this is not the case then 
$S_{B_{\boldsymbol{s}}^{\boldsymbol{\varepsilon}}}=
S_{B_{\boldsymbol{s}}^{\boldsymbol{1}}}$ for every $\boldsymbol{\varepsilon}
\in \mathcal{E}$, and 
\begin{equation} \label{b}
\delta_{\boldsymbol{s}}(U) = (\sum_{\boldsymbol{\varepsilon}
\in
\mathcal{E}}
(-1)^{\abs{\boldsymbol{\varepsilon}}})
I(S_{B_{\boldsymbol{s}}^{\boldsymbol{1}}}
\in U) = 0.
\end{equation}
Consequently,
\[
\begin{split}
| I(S_{\Lambda} \in U) -\sum_{\boldsymbol{t} \in \Lambda}
\delta_{\boldsymbol{t}}(U)| &\le 2^d\sum_{\boldsymbol{s} \in
 \Lambda^c} \sum_{\boldsymbol{t}\in \Lambda}
I(\boldsymbol{t} \in  B_{\boldsymbol{s}}\setminus
B_{\boldsymbol{s+\boldsymbol{1}}}) I(Z_{\boldsymbol{t}}\ne 0)\\
&\le 2^d((m+1)^d - m^d-1)\sum_{\boldsymbol{t}\in \partial\Lambda}
I(Z_{\boldsymbol{t}}\ne 0)
\end{split}
\]
which is clearly dominated by the right hand side of (\ref{eM1}).  
\vskip18pt

\subsection*{Case 2. $m < \text{diam}(\Lambda_0)\le 2m$.} \ 

In this case there exist $\boldsymbol{t}_0, \boldsymbol{s}_0 \in \Lambda$
such that $Z_{\boldsymbol{t}_0}\ne 0$, $Z_{\boldsymbol{s}_0}\ne 0$,
and $\|\boldsymbol{t}_0- \boldsymbol{s}_0\|_{\infty} > m$. Hence 
\begin{equation}
\label{jeden}
2^{-1}\sum_{\genfrac{}{}{0pt}{1}{\boldsymbol{s,t} \in
\Lambda}{\|\boldsymbol{t} - 
\boldsymbol{s}\|_{\infty} > m}} I(Z_{\boldsymbol{s}} \neq
0,Z_{\boldsymbol{t}} \neq 0) \geq 1
\end{equation}
and trivially,
\begin{equation}
\label{dwa} I(S_{\Lambda} \in U) \leq
2^{-1}\sum_{\genfrac{}{}{0pt}{1}{\boldsymbol{s,t} \in
\Lambda}{\|\boldsymbol{t} - 
\boldsymbol{s}\|_{\infty} > m}} I(Z_{\boldsymbol{s}} \neq
0,Z_{\boldsymbol{t}} \neq 0).
\end{equation}
By the present assumption there exists $\boldsymbol{t}_0$ such that
\[
\Lambda_0 \subset \{0,\ldots,2m\}^d + \boldsymbol{t}_0:= K_{\boldsymbol{t}_0}.
\]
Similarly as in Case 1 we argue that $\delta_{\boldsymbol{t}}(U)=0$ for 
$\boldsymbol{t}\notin K_{\boldsymbol{t}_0}$.
Since each such term is bounded by $2^{d-1}$, we get 
\begin{equation}\label{trzy}
\begin{split}
\sum_{\boldsymbol{t} \in \Lambda}
\abs{\delta_{\boldsymbol{t}}(U)}
&=\sum_{\boldsymbol{t} \in \Lambda \cap K_{\boldsymbol{t}_0}}
\abs{\delta_{\boldsymbol{t}}(U)} \\
&\leq 2^{d-1} (2m+1)^d\ 2^{-1}
\sum_{\genfrac{}{}{0pt}{1}{\boldsymbol{s,t} \in \Lambda}{\|\boldsymbol{t} -
\boldsymbol{s}\|_{\infty} > m}} I(Z_{\boldsymbol{s}} \neq
0,Z_{\boldsymbol{t}} \neq 0).
\end{split}
\end{equation}
Now (\ref{dwa}) and (\ref{trzy}) together imply (\ref{eM1}).

\subsection*{Case 3. $\text{diam}(\Lambda_0) > 2m$.} \

The assumption implies (\ref{jeden}), hence (\ref{dwa}). Moreover,
for every $\boldsymbol{u}\in \mathbb{Z}^d$ there exists $\boldsymbol{s}\in
\Lambda_0$ such that $\|\boldsymbol{u}-\boldsymbol{s}\|_{\infty}>m$.
From the proof of Case 1 (see (\ref{b})) we know that 
$\delta_{\boldsymbol{t}} (U) \neq 0$ implies that $Z_{\boldsymbol{u}}\ne 0$
for some $\boldsymbol{u} \in B_{\boldsymbol{t}} \setminus
B_{\boldsymbol{t}+\boldsymbol{1}}$ and, under the present assumption, for such
an $\boldsymbol{u}$ there is  $\boldsymbol{s}$ such that 
$Z_{\boldsymbol{s}}\ne 0$ and $\|\boldsymbol{u}-\boldsymbol{s}\|_{\infty}>m$.
Hence we have the following estimates
\[
\begin{split}
\sum_{\boldsymbol{t} \in \Lambda}
\abs{\delta_{\boldsymbol{t}}(U)} &\leq 2^{d-1} \sum_{\boldsymbol{t} \in \Lambda}
\sum_{\boldsymbol{u} \in
(B_{\boldsymbol{t}} \setminus B_{\boldsymbol{t + \boldsymbol{1}}})\cap
\Lambda} 
\sum_{\genfrac{}{}{0pt}{1}{\boldsymbol{s} \in \Lambda}{\|\boldsymbol{s} -
\boldsymbol{u}\|_{\infty} > m}} I(Z_{\boldsymbol{s}} \neq
0,Z_{\boldsymbol{u}} \neq 0)\\
&= 2^{d-1}\sum_{\genfrac{}{}{0pt}{1}{\boldsymbol{s, u} \in \Lambda}
{\|\boldsymbol{s} - \boldsymbol{u}\|_{\infty} > m}} 
\big[\sum_{\boldsymbol{t} \in  \Lambda} 
I(\boldsymbol{u} \in
B_{\boldsymbol{t}} \setminus B_{\boldsymbol{t + \boldsymbol{1}}})\big]
I(Z_{\boldsymbol{s}} \neq 0,Z_{\boldsymbol{u}} \neq 0)\\
&\le 2^{d-1} ((m+1)^d - m^d)\ 
\sum_{\genfrac{}{}{0pt}{1}{\boldsymbol{s, u} \in \Lambda}{\|\boldsymbol{s} -
\boldsymbol{u}\|_{\infty} > m}} I(Z_{\boldsymbol{s}} \neq
0,Z_{\boldsymbol{u}} \neq 0)\\
&\leq  2^{d} (2m + 1)^d \ 2^{-1} 
\sum_{\genfrac{}{}{0pt}{1}{\boldsymbol{s, u} \in \Lambda}{\|\boldsymbol{s} -
\boldsymbol{u}\|_{\infty} > m}} I(Z_{\boldsymbol{s}} \neq
0,Z_{\boldsymbol{u}} \neq 0).
\end{split}
\]
The proof of Theorem \ref{TM} is complete.
\end{proof}
\vskip18pt

\section{An abstract form of the Bonferroni--type inequality }

Consider a family of events ${\mathcal A}=\{A_T\}$ indexed by finite
\underline{subsets} $T$ of ${\mathbb Z}^d$.  A family of events 
${\mathcal C}= \{C_{\boldsymbol{t}}\}$ indexed by \underline{points}
$\boldsymbol{t} \in
\mathbb{Z}^d$ is said to be a {\em complete cover} of $\mathcal A$ \ 
if for every finite sets \ 
$T, T_1, T_2 \subset {\mathbb Z}^d$, \ 
$A_T \subset \bigcup_{\boldsymbol{t}\in T} C_{\boldsymbol{t}}$ \ 
and
\begin{equation}\label{eBSM}
A_{T_1} \triangle A_{T_2}\subset 
\bigcup_{\boldsymbol{t}\in T_1\triangle T_2}  C_{\boldsymbol{t}}.
\end{equation}
Define
$$
\Delta_{\boldsymbol{t}}=\sum_{\varepsilon \in \mathcal{E}}
(-1)^{|\varepsilon|}P(A_{B_{\boldsymbol{t}}^{\varepsilon}}),
$$
where $B_{\boldsymbol{t}}^{\varepsilon}$ is given by (\ref{Be}).
Following the steps of the proof of Theorem \ref{TM} we can prove the
following 
``abstract form'' of the Bonferroni--type inequality. 
\vskip18pt

\begin{theorem}\label{abs}
Let $\mathcal A$ and $\mathcal C$ be families satisfying (\ref{eBSM}).
Then for every finite set $\Lambda \subset {\mathbb Z}^d$
\begin{equation}\label{eA}
\begin{split}
|P(A_{\Lambda})- \sum_{\boldsymbol{t}\in \Lambda}\Delta_{\boldsymbol{t}}|
\le & c_1(d,m) \sum_{\boldsymbol{s}\in \partial\Lambda} P(C_{\boldsymbol{t}})\\
&+c_2(d,m)\sum_{\genfrac{}{}{0pt}{1}{\boldsymbol{s},\boldsymbol{t} \in
\Lambda} 
{\|\boldsymbol{s}-\boldsymbol{t}\|_{\infty}>m}}
P(C_{\boldsymbol{s}}\cap C_{\boldsymbol{t}})
\end{split}
\end{equation}
where constants $c_1$ and $c_2$ are the same as in Theorem \ref{TM}.
\end{theorem}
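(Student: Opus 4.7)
\textbf{Proof plan for Theorem \ref{abs}.}

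The plan is to imitate the proof of Theorem \ref{TM} essentially line by line, using the following dictionary: $\{S_T \in U\}$ becomes $A_T$, and the indicator $I(Z_{\boldsymbol{t}} \ne 0)$ becomes $I_{C_{\boldsymbol{t}}}$. What made the concrete proof work pointwise was that $Z_{\boldsymbol{s}}(\omega) = 0$ implied $\boldsymbol{s}$ could be freely added to or removed from any index set without changing $S_T(\omega)$; its abstract replacement is the complete cover property (\ref{eBSM}), which gives exactly: if $\omega \notin C_{\boldsymbol{s}}$, then $I_{A_T}(\omega) = I_{A_{T \triangle \{\boldsymbol{s}\}}}(\omega)$. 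In particular, with the random active set $\Lambda_0(\omega) = \{\boldsymbol{t} : \omega \in C_{\boldsymbol{t}}\}$, the indicator $I_{A_T}(\omega)$ depends on $T$ only through $T \cap \Lambda_0(\omega)$. As in Theorem \ref{TM}, setting $\delta_{\boldsymbol{t}}(\omega) = \sum_{\boldsymbol{\varepsilon} \in \mathcal{E}} (-1)^{|\boldsymbol{\varepsilon}|} I_{A_{B_{\boldsymbol{t}}^{\boldsymbol{\varepsilon}}}}(\omega)$ reduces the task to the pointwise analog of (\ref{eA}).

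First I would perform the boundary reduction: define $A'_T := A_{T \cap \Lambda}$ and $C'_{\boldsymbol{t}} := C_{\boldsymbol{t}}$ for $\boldsymbol{t} \in \Lambda$, $C'_{\boldsymbol{t}} := \emptyset$ otherwise, and check via (\ref{eBSM}) that the primed pair is again a family with complete cover. The difference $\delta_{\boldsymbol{t}} - \delta'_{\boldsymbol{t}}$ is nonzero only when some $\boldsymbol{s} \in B_{\boldsymbol{t}} \cap \Lambda^c$ has $\omega \in C_{\boldsymbol{s}}$, and is bounded by $2^d$ times the corresponding indicator; the combinatorial counting over pairs $(\boldsymbol{t}, \boldsymbol{s})$ with $\boldsymbol{s} \in B_{\boldsymbol{t}} \cap \Lambda^c$ then reproduces (\ref{eM2})--(\ref{eM3}) and supplies the contribution $2^d((m+1)^d - 1)\sum_{\boldsymbol{s} \in \partial_1\Lambda} I_{C_{\boldsymbol{s}}}(\omega)$ to the first term on the right of (\ref{eA}).

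Next, under the reduction $C_{\boldsymbol{t}} = \emptyset$ for $\boldsymbol{t} \notin \Lambda$, I would split into the same three cases according to $\mathrm{diam}(\Lambda_0(\omega))$. Case 2 ($m < \mathrm{diam}(\Lambda_0) \leq 2m$) and Case 3 ($\mathrm{diam}(\Lambda_0) > 2m$) are purely combinatorial and transcribe verbatim, using $|\delta_{\boldsymbol{t}}(\omega)| \leq 2^{d-1}$ together with the observation (cf.\ (\ref{b})) that $\delta_{\boldsymbol{t}}(\omega) \neq 0$ forces $\omega \in C_{\boldsymbol{u}}$ for some $\boldsymbol{u} \in B_{\boldsymbol{t}} \setminus B_{\boldsymbol{t}+\boldsymbol{1}}$. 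In Case 1 ($\mathrm{diam}(\Lambda_0) \leq m$, so $\Lambda_0 \subset B_{\boldsymbol{t}_0}$ for some $\boldsymbol{t}_0$), one uses (\ref{eBSM}) to replace each $I_{A_{B_{\boldsymbol{t}}^{\boldsymbol{\varepsilon}}}}(\omega)$ by $q_{\boldsymbol{t}+\boldsymbol{\varepsilon}} := I_{A_{B_{\boldsymbol{t}+\boldsymbol{\varepsilon}} \cap B_{\boldsymbol{t}_0}}}(\omega)$ (symmetric differences lie in $B_{\boldsymbol{t}_0}^c$, so $\Lambda_0 \subset B_{\boldsymbol{t}_0}$ makes the swap harmless), and then runs the inclusion--exclusion identity (\ref{eMxxx})--(\ref{eMxxxx}) on the additive set function $Q(A) = \sum_{\boldsymbol{t} \in A \cap B_{\boldsymbol{t}_0}} q_{\boldsymbol{t}}$ to deduce $I_{A_{B_{\boldsymbol{t}_0}}}(\omega) = \sum_{\boldsymbol{t} \in B_{\boldsymbol{t}_0}} \delta_{\boldsymbol{t}}(\omega)$, absorbing the remaining discrepancy into the second part of $\partial\Lambda$ exactly as in the original proof.

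Taking expectations in the resulting pointwise inequality gives (\ref{eA}). I expect the main obstacle to be the verification of Case 1: everywhere in the original argument where equality of sums $S_{B_{\boldsymbol{t}}^{\boldsymbol{\varepsilon}}}$ was invoked as an algebraic identity, one must carefully re-derive the corresponding equality of indicators from (\ref{eBSM}), checking each time that the relevant symmetric difference is disjoint from $\Lambda_0(\omega)$. The remaining pieces are purely combinatorial counts that depend only on the lattice geometry and are therefore insensitive to the move from the concrete to the abstract setting.
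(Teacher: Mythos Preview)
Your proposal is correct and follows essentially the same route as the paper's own proof: reduce to a pointwise inequality via $\delta_{\boldsymbol{t}}$, pass to the modified families $A'_T = A_{T\cap\Lambda}$, $C'_{\boldsymbol{t}}$ to handle the $\partial_1\Lambda$ contribution, and then split on $\mathrm{diam}(\Lambda_0)$ exactly as in Theorem~\ref{TM}, with (\ref{eBSM}) replacing each algebraic identity of sums. In fact you spell out Case~1 in more detail than the paper, which simply says the three cases are established ``exactly as in the proof of Theorem~\ref{TM}.''
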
 

\begin{proof} We will only indicate the main steps of the proof.
Since $\Delta_{\boldsymbol{t}}=E\delta_{\boldsymbol{t}}$, where
\begin{equation} \label{da}
\delta_{\boldsymbol{t}}=\sum_{\varepsilon \in \mathcal{E}}
(-1)^{|\varepsilon|}I(A_{B_{\boldsymbol{t}}^{\varepsilon}}), 
\end{equation}
it is enough to prove that
\begin{equation}\label{eM1a}
\begin{split}
\abs{I(A_{\Lambda}) - \sum_{\boldsymbol{t} \in \Lambda}
\delta_{\boldsymbol{t}}} &\leq  c_1(d,m) \sum_{\boldsymbol{s}
\in \partial\Lambda} I(C_{\boldsymbol{s}}) \\
&\quad + c_2(d,m)
\sum_{\genfrac{}{}{0pt}{1}{\boldsymbol{s,t} \in \Lambda}{\|\boldsymbol{t} -
\boldsymbol{s}\|_{\infty} > m}} I(C_{\boldsymbol{s}} \cap 
C_{\boldsymbol{t}}) 
\end{split}
\end{equation}
holds everywhere on the probability space. First we will show that it suffices
to prove (\ref{eM1a}) for the modifications ${\mathcal A'}=\{A_T'\}$
and ${\mathcal C'}=\{C_{\boldsymbol{t}}'\}$ defined as follows
\[
A_T' = A_{T\cap \Lambda}
\]
and
\[
C_{\boldsymbol{t}}' = \begin{cases}
C_{\boldsymbol{t}} & \text{if $\boldsymbol{t}\in \Lambda$},\\
\emptyset & \text{if $\boldsymbol{t} \notin \Lambda$}.
\end{cases}
\]
Note that ${\mathcal C'}$ is a complete cover of ${\mathcal A'}$.
Define $\delta_{\boldsymbol{t}}'$ by replacing in (\ref{da})
$A_{B_{\boldsymbol{t}}^{\varepsilon}}$ with
$A_{B_{\boldsymbol{t}}^{\varepsilon}}'$.  If 
$\delta_{\boldsymbol{t}} \ne \delta_{\boldsymbol{t}}'$, then
(\ref{eBSM}) yields
\[
\begin{split}
|\delta_{\boldsymbol{t}}-\delta_{\boldsymbol{t}}'| 
&\le \sum_{\varepsilon \in \mathcal{E}}
| I(A_{B_{\boldsymbol{t}}^{\varepsilon}}) -
I(A_{B_{\boldsymbol{t}}^{\varepsilon}}') | \\
& \le  \sum_{\varepsilon \in \mathcal{E}} \sum_{\boldsymbol{s}\in 
B_{\boldsymbol{t}}^{\varepsilon}\cap \Lambda^c } I(C_{\boldsymbol{s}})\\
& \le 2^d \sum_{\boldsymbol{s}\in 
B_{\boldsymbol{t}}\cap \Lambda^c } I(C_{\boldsymbol{s}})
\end{split}
\]
which makes the reduction from ${\mathcal A}, {\mathcal C}$ to
${\mathcal A'}, {\mathcal C'}$ possible, analogously to the first part
of the proof of Theorem \ref{TM}, (\ref{eM2})--(\ref{eM3}).
Now we can assume that $C_{\boldsymbol{t}}=\emptyset$ for
$\boldsymbol{t} \notin \Lambda$. Define a random set
\[ \Lambda_0 = \{\boldsymbol{s}\in \mathbb{Z}^d : I(C_{\boldsymbol{t}})=1\}.
\]
(\ref{eM1a}) can now be established by considering the three cases of
$\text{diam}(\Lambda_0)$, exactly as in the proof of Theorem \ref{TM}.
\end{proof}
\vspace{12pt}

Theorem \ref{abs} gives Bonferroni-type inequalities in a variety of
important cases. We mention below some of them. \\[12pt]
\textbf{Examples.}\\[10pt]
\textbf{(i)}\ $\mathcal{C}$ is an arbitrary family of events and 
\[ A_T = \bigcup_{\boldsymbol{t}\in T} C_{\boldsymbol{t}}.\]
In this case Theorem \ref{abs} generalizes the classical Bonferroni
inequality (\ref{e1}).\\[8pt]
\textbf{(ii)}\ $A_T = \{\max_{\boldsymbol{t}\in T}
Z_{\boldsymbol{t}} > \lambda\}$ and $C_{\boldsymbol{t}} 
= \{Z_{\boldsymbol{t}} > \lambda\}$, where $\{Z_{\boldsymbol{t}}\}$ is a
real-valued random field. This 
is a special important case of (i).\\[8pt]
\textbf{(iii)}\ $A_T = \{\sum_{\boldsymbol{t}\in T}
Z_{\boldsymbol{t}} \in U\}$, where $Z_{\boldsymbol{t}}$ 
and $U$ are as in Section 2, and $C_{\boldsymbol{t}} =
\{Z_{\boldsymbol{t}} \ne 0\}$. This shows that Theorem \ref{TM}
is a special case of Theorem \ref{abs}.\\[8pt]
\textbf{(iv)}\ $A_T = \{\prod_{\boldsymbol{t}\in T}
Z_{\boldsymbol{t}} \in U\}$ and $C_{\boldsymbol{t}} = 
\{Z_{\boldsymbol{t}} \ne 1\}$, where $\{Z_{\boldsymbol{t}}\}$ is a
complex-valued random field and $U$  
is a Borel subset of the complex plane such that $1 \notin U$.\\[8pt]
\textbf{(v)}\ Let $\{Z_{\boldsymbol{t}}\}$ be a random field
taking values in a 
measurable semigroup $G$ with the neutral element $I$. Fix a linear order in
$\mathbb{Z}^d$ to avoid the ambiguity in the definition of
$\Pi_T = \prod_{\boldsymbol{t}\in T} Z_{\boldsymbol{t}}$ in the case when $G$
is non Abelean (for instance, the lexicographical order). Then $A_T = \{\Pi_T
\in U\}$ and
$C_{\boldsymbol{t}} = \{Z_{\boldsymbol{t}} \ne I\}$ satisfy the assumptions
of Theorem \ref{abs}, provided $I \notin U$. In particular, Theorem
\ref{abs} gives the Bonferroni-type inequality for products of random
matrices. 
\vskip18pt

\bibliographystyle{amsalpha}

\end{document}